\theoremstyle{plain}
\newtheorem{Lemma}{Lemma}[section]
\newtheorem{theorem}[Lemma]{Theorem}
\newtheorem{proposition}[Lemma]{Proposition}
\newtheorem{corollary}[Lemma]{Corollary}
\theoremstyle{definition}
\newtheorem{definition}[Lemma]{Definition}
\newtheorem{example}[Lemma]{Example}
\theoremstyle{remark}
\newtheorem{remark}[Lemma]{Remark}
\DeclareMathOperator{\End}{End}
\def\Z{ \mathbb{Z} }
\def\cleft{\hbox{[\kern-.16em\hbox{[}}}
\def\cright{\hbox{]\kern-.16em\hbox{]}}}
\newcommand{\Ker}{ \ensuremath{\mathrm{Ker}} }
\newcommand{\Gp}{\mathsf{Gp}}
\newcommand{\Set}{\mathsf{Set}}
\newcommand{\DiGp}{\mathsf{DiGp}}
\newcommand{\rann}{\operatorname{r.ann}}
\newcommand{\lann}{\operatorname{l.ann}}
\DeclareMathOperator{\Aut}{Aut}
\renewcommand{\phi}{\varphi}
\DeclareMathOperator{\sgn}{sgn}
\DeclareMathOperator{\id}{id}
\numberwithin{equation}{section}
\begin{document}
	
\title{Semidirect products of skew braces}


\author{Alberto Facchini}
\address{Dipartimento di Matematica ``Tullio Levi-Civita'', Universit\`a di Padova, 35121 Padova, Italy}
\email{facchini@math.unipd.it}
\thanks{The first author was partially supported by Ministero dell'Universi\-t\`a e della Ricerca (Progetto di ricerca di rilevante interesse nazionale ``Categories, Algebras: Ring-Theoretical and Homological Approaches (CARTHA)''), Fondazione Cariverona (Research project ``Reducing complexity in algebra, logic, combinatorics - REDCOM'' within the framework of the programme Ricerca Scientifica di Eccellenza 2018), and the Department of Mathematics ``Tullio Levi-Civita'' of the University of Padua (Research programme DOR1828909 ``Anelli e categorie di moduli''). The second author was partially supported by Austrian Science Fund (FWF), Project Number W1230.}

\author{Mara Pompili}
\address{University of Graz, Institute for Mathematics and Scientific Computing, Heinrichstrasse 36, 8010 Graz, Austria}
\email{mara.pompili@uni-graz.at}

 \keywords{Brace; Skew brace; Yang-Baxter equation; Semidirect product; Commutator; Action \\ {\small 2020 {\it Mathematics Subject Classification.} Primary 16T25, 18E13, 20N99.}}

   \begin{abstract} We study the notions of action, semidirect product and commutator of ideals for digroups and left skew braces.\end{abstract}

	\maketitle
	
	\section{Introduction}
	
	This paper is devoted to the study of semidirect products of left skew braces and, more generally,
digroups. Semidirect products are in strict relation with  idempotent endomorphisms and actions. We also describe the notions of commutator of ideals and center for digroups and skew braces.

Recall that a {\em digroup} is a triplet $(D,*,\circ)$, where $(D,*) $ and $(D,\circ)$ are groups   with the same identity $1_D$. A {\sl (left) skew brace}  \cite{GV}  is
		a digroup $(D, *,\circ)$ for which \begin{equation}a\circ (b * c) = (a\circ b)*a^{-*}* ( a\circ c)\label{lsb}\tag{B}\end{equation}
		for every $a,b,c\in D$. Here $a^{-*}$ denotes the inverse of $a$ in the group $(D,*)$. 
		
		Left skew braces were introduced in 2017 by Guarnieri and Vendramin \cite{GV} in the study of the set-theoretic solutions
of the Yang-Baxter equation, that is, of the pairs $(X, r)$, where $X$ is a set,
$r \colon X\times X \to X \times X$ is a bijection, and 
$$(r\times\id)(\id\times r)(r\times\id)=(\id\times r)(r\times\id)(\id\times r).$$
The study of the set-theoretic solutions of  the Yang-Baxter equation was proposed by 
Drinfeld  in 1992 \cite{Drinfeld}. Left skew braces allow to determine non-degenerate solutions of the Yang-Baxter equation, i.e., the solutions $(X, r)$ for which the mappings $\pi_1\circ r(x,-)\colon X\to X$ and $\pi_2\circ r(-,y)\colon X\to X$ are all bijections, for every $x,y\in X$. Here $\pi_1$ and $\pi_2$ are the two canonical projections $X\times X\to X$.
		
				Given a digroup $(D,*,\circ)$ and an element $a$ of $D$, let $\lambda^D_a\colon D\to D$ denote the mapping defined by
$$\lambda^D_a(b)=a^{-*}*(a\circ b)$$ for every $b\in D$. Then every $\lambda^D_a$ is a bijection. The mapping $$\lambda^D\colon D\to S_D,\qquad \lambda^D\colon a\mapsto\lambda^D_a,$$ is a mapping of the set $D$ into the symmetric group $S_D$ of all permutations of  the set $D$. More precisely, $\lambda^D$ is a morphism $\lambda^D\colon (D,1)\to (\Aut_{\Set_*}(D,1),\mathrm{Id})$ in the category $\Set_*$ of all pointed sets, of $(D,1)$ into the automorphism group $ \Aut_{\Set_*}(D,1)$ of all automorphisms of  the pointed set $(D,1)$. 
The digroup $(D,*,\circ)$ is a left skew brace if and only if $\lambda^D$ is a group morphism of $(D,\circ)$ into the group $\Aut_\Gp(D,*)$ or, equivalently, when $\lambda^D_a$ is a group automorphism $(D,*) \to (D,*)$ for all $a\in D$.  

		In this paper we deepen some aspects of the study of digroups and left skew braces we began to investigate with Dominique Bourn in \cite{BFP}. Left skew braces show an unexpected algebraic richness. 

One of the aspects of digroups and skew braces we hadn't considered in \cite{BFP} and we do consider here is that of semidirect product of digroups and left skew braces, a
notion that has already appeared in the literature of skew braces \cite[Definition 2.2]{Kinnear}, \cite{R46}, \cite[Corollaries 2.36 and 2.37]{SV}, and \cite{Rathee}. Here we give a systematic treatment of semidirect product of left skew braces, also considering the case of digroups. 

For groups it is well known that idempotent endomorphisms $e$ of a group $X$ are in one-to-one correspondence with the semidirect-product decompositions $X=Y\ltimes_{\phi} K$ where $K=\Ker(e)$, $Y= e(X)$ and $\phi\colon Y\to\Aut(K)$ is the action of $Y$ on $K$ defined by $\phi_y(k)=y^{-1} k y$. This $\phi$ is a group antihomomorphism. The operation on $Y\ltimes_{\phi} K$ is defined by $(y,k)(y',k')=(y y', \phi_{y'}(k) k')$.
We will see in this paper that, in the categories of digroups (and left skew braces, resp.), it is natural to define an {\em action} of a digroup $Y$ on a digroup $K$ (of a left skew brace $Y$ on a left skew brace $K$, resp.) as a triplet $(\phi_*,\phi_{\circ},\Lambda)$ where $\phi_*\colon(Y,*)\to \Aut(K,*)$ and $\phi_{\circ}\colon( Y,\circ)\to \Aut_\Gp(K,\circ)$ are two group antihomomorphisms and $\Lambda\colon (Y,1)\to (\Aut_{\Set_*}(K,1),\mathrm{Id})$ is a morphism, in the category $\Set_*$ of all pointed sets, of  $(Y,1)$ into the automorphism group $ \Aut_{\Set_*}(K,1)$ (where $\Lambda\colon( Y,\circ)\to \Aut_\Gp(K,*)$ is a group homomorphism, respectively. For left skew braces, we will see in the next paragraph that a suitable identity must be also satisfied.)

Given such a digroup action $(\phi_*,\phi_{\circ},\Lambda)$, define  two operations $+$ and $\ltimes_\circ$ on the cartesian product $Y\times K$ via $$(y,k)+(y',k')=(y* y', (\Lambda_{y* y'})^{-1}(\phi_{* y'}(\Lambda_y(k))* \Lambda_{y'}(k')))$$ and $$(y,k)\ltimes_\circ(y',k')=(y\circ y', \phi_{\circ y'}(k)\circ k')$$ for every $(y,k),(y',k')\in Y\times K$. This is the semidirect product $(Y\ltimes K,+,\ltimes_\circ)$ of the digroups $K$ and $Y$, and we show in Proposition~\ref{hl} that if $Y$ and $K$ are a pair of left skew braces and $(\phi_*,\phi_{\circ},\Lambda)$ is an action of digroups of  $Y$ on $K$, then the digroup $(Y\ltimes K,+,\ltimes_\circ)$ is a left skew brace if and only if $\Lambda$ is a group homomorphism $\Lambda\colon( Y,\circ)\to \Aut_\Gp(K,*)$ and
	$$\begin{array}{l}
        \phi_{\circ (y'*y'')}(k)\circ (\Lambda_{y'*y''})^{-1}\bigl(\phi_{*y''}(\Lambda_{y'}(k'))*\Lambda_{y''}(k'')\bigr) = \\ \ \ =
        (\Lambda_{y\circ (y'*y'')})^{-1}\bigl(\phi_{*\lambda_{y}^Y(y'')}(\Lambda_{y\circ y'}(\phi_{\circ y'}(k)\circ k')*\Lambda_{y}(k)^{-*})\bigr)*\Lambda_{y\circ y''}(\phi_{\circ y''}(k)\circ k'')
    \end{array}$$ for every $y,y',y''\in Y$, $k,k',k''\in K$.

We are grateful to Professor D. Bourn for several discussions about the content of this paper.

	\section{Elementary facts about idempotent endomorphisms, and splitting  idempotents}\label{2}
	
	The equivalence relations on a digroup $(D, *,\circ)$ that are compatible with both group operations (that is, congruences of the algebra $(D, *,\circ)$) are in one-to-one correspondence with {\em ideals} $I$ of $D$, that is, normal subgroups $I$ of $(D,*)$ that are also normal subgroups of $(D, \circ)$ and have the property that $a*I=a\circ I$ for every $a\in D$ (the cosets modulo $I$ with respect to the two operations coincide) \cite{BFP}. 
		There is a forgetful functor $U$ of the category $\DiGp$ of digroups into the category $\Set_*$ of pointed sets, which assigns to each digroup $D$ the pointed set $(D$,1).
	
	Given a digroup $(D,*,\circ)$ and an element $a$ of $D$, let $\lambda^D_a\colon D\to D$ denote the mapping defined by
$$\lambda^D_a(b)=a^{-*}*(a\circ b)$$ for every $b\in D$. Here $a^{-*}$ denotes the inverse of $a$ in the group $(D,*)$, and $a^{-\circ}$ denotes the inverse of $a$ in $(D,\circ)$. Then every $\lambda^D_a$ is a bijection; more precisely, it is an automorphism of $U(D)$ in the category $\Set_*$. Its inverse is the morphism $(D,1)\to (D,1)$, $c\mapsto a^{-\circ}\circ(a*c)$. (This is the mapping $\lambda^D_a$ relative to the digroup $(D,\circ,*)$ with the operation $*$ and $\circ$ swapped. Notice that the digroups  $(D,*,\circ)$ and $(D,\circ,*)$ are not isomorphic in the category $\DiGp$.) The mapping $\lambda^D\colon (D,1)\to (\Aut_{\Set_*}(D,1),\mathrm{Id})$ is a morphism in the category $\Set_*$ of the pointed set $(D,1)$ into the automorphism group $ \Aut_{\Set_*}(D,1)$ of all automorphisms of  the pointed set $(D,1)$, that is, the
 group of all the permutations of  the set $D$ that fix~$1$. The property ``$a*I=a\circ I$ for every $a\in D$'' in the definition of ideal $I$ of a digroup $D$ is equivalent to ``$\lambda_a^D(I)\subseteq I$  for every $a\in D$''.
The digroup $(D,*,\circ)$ is a (left) skew brace if and only if the image $\lambda^D(D)$ of the mapping $\lambda^D$ is contained in the monoid of all endomorphisms of the group $(D,*)$, equivalently if and only if
$\lambda^D(D)\subseteq\Aut(D,*)$, that is, if and only if $\lambda^D$ is a group morphism of $(D,\circ)$ into the group $\Aut(D,*)$.  When it is clear from the context, we will write $\lambda$ instead of $\lambda^D$. 

\begin{remark}{\rm A similar situation occurs for rings. Assume that we have an abelian additive group $(R,+)$ with a further binary operation $\cdot\colon R\times R\to R$, $\cdot\colon (a,b)\mapsto ab$. For an element $a$ of $R$, let $\lambda_a\colon R\to R$ denote the mapping defined by
$\lambda_a(b)=ab$ for every $b\in R$. Then $\lambda_a$ is a mapping $R\to R$, that is, a morphism in the category $\Set$. The mapping $\lambda\colon R\to R^R$, $\lambda\colon a\mapsto\lambda_a$, is a morphism in $\Set$ of the set $R$ into the monoid $R^R$ of all mappings $R\to R$.
Then right and left distributivities in $(R,+,\cdot)$ hold if and only if  the image $\lambda(R)$ of the mapping $\lambda$ is contained in the ring $\End(R,+)$ of all the endomorphisms of the group $(R,+)$ and $\lambda$ is a group morphism of $(R,+)$ into the additive group $\End(R,+)$. If these equivalent conditions holds, then $\cdot$ is associative if and only if the group morphism
$\lambda$ also respects the multiplications of $R$ and $\End(R,+)$, i.e., in this case $\lambda$ is a ring morphism of $R$ into the endomorphism ring $\End(R,+)$.}\end{remark}

	Fix a digroup $(D,*,\circ)$ and an idempotent endomorphism $e$ of $D$. Let $B$ be the image of $e$, so that $B$ is a  subdigroup of $D$, and let $I$ be the kernel of $e$, which is an ideal of $D$. Since $e$ is an idempotent group endomorphism with respect to both group structures on $D$, we get that $D$ is a semidirect product of $B$ and $I$ with respect to both groups structures on $D$, so that $D=I*B=\{\, i*b\mid i\in I,b\in B\,\}$, $D=I\circ B=\{\, i\circ b\mid i\in I,b\in B\,\}$, $I\cap B=\{1_D\}$.

\medskip

Conversely, assume that the digroup $D$ has a subdigroup $B$ and an ideal $I$ such that $D$ is the semidirect product with respect to the group structure $(D,\circ)$ (i.e., $B\circ I=D$ and $B\cap I=\{1_D\}$). Then every element $a\in D$ can be written in a unique way as $a=b\circ i$. Clearly, the mapping $e\colon a=b\circ i\mapsto b$ is an idempotent group endomorphism of $(D,\circ)$. Let us prove that this mapping $e$ is a digroup endomorphism. We must show that $e(a_1*a_2)=e(a_1)*e(a_2)$ for every $a_1,a_2\in D$, that is, that for every $b_1,b_2\in B$ and every $i_1,i_2\in I$ one has that $e((b_1\circ i_1)*(b_2\circ i_2))=b_1*b_2$. Equivalently, it suffices to show that, for every $b_1,b_2\in B$, one has $(b_1\circ I)*(b_2\circ I)\subseteq (b_1*b_2)\circ I$. But $(b_1\circ I)*(b_2\circ I)=(b_1*I)*(b_2*I)=(b_1*b_2)*I= (b_1*b_2)\circ I$, as desired.

This proves the equivalence of Conditions (1), (2), (3) and  (8) in the following Proposition.

\begin{proposition} \label{0.1} Let $(D,*,\circ)$ be  a digroup (resp.~left skew brace), $B$ a subdigroup (resp.~subbrace) of $D$ and $I$ an ideal of $D$. The following conditions are equivalent:

	\begin{enumerate}
		\item $D=B\circ I$ and $B\cap I = \{1_D\}$.
		\item For every $a\in D$, there are a unique element $b\in B$ and a unique element $i_1\in I $ such that $a=b\circ i_1$.
		\item For every $a\in D$, there are a unique element $b\in B$ and a unique element $i_2\in I $ such that $a=i_2\circ b$.
		\item $D=B*I$ and $B\cap I = \{1_D\}$.

		\item For every $a\in D$, there are a unique element $b\in B$ and a unique element $i_3\in I $ such that $a=b*i_3$.
		\item For every $a\in D$, there are a unique element $b\in B$ and a unique element $i_4\in I $ such that $a=i_4* b$.
		\item There exists a digroup (resp.~left skew brace) morphism $D\to B$ whose restriction to $B$ is the identity and whose kernel is $I$.  
		\item There is an idempotent digroup (resp.~left skew brace) endomorphism of $D$ whose image is $B$ and whose kernel is $I$.
	\end{enumerate}
 \end{proposition}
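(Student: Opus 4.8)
The plan is to reduce the whole statement to two copies of the standard ``internal semidirect product'' lemma for groups---one for $(D,\circ)$, one for $(D,*)$---bridged by the coset identity $a*I=a\circ I$ that is part of the definition of an ideal. Since the equivalence of (1), (2), (3) and (8) has already been established above, it suffices to fold in (4), (5), (6) and (7).

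First I would record the bridge. Because $I$ is an ideal, $a*I=a\circ I$ for every $a\in D$, and because $I$ is normal in both groups we also have $I*a=a*I=a\circ I=I\circ a$; taking the union over $a\in B$ shows that the four subsets $B*I$, $I*B$, $B\circ I$, $I\circ B$ of $D$ all coincide. In particular $D=B\circ I$ if and only if $D=B*I$, and as the clause $B\cap I=\{1_D\}$ is common to (1) and (4), this gives (1) $\Leftrightarrow$ (4) immediately.

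Next I would prove (4) $\Leftrightarrow$ (5) $\Leftrightarrow$ (6) by the usual argument, now inside the group $(D,*)$. If $D=B*I$ and $B\cap I=\{1_D\}$, then every $a$ is of the form $b*i$ with $b\in B$, $i\in I$, and $b*i=b'*i'$ forces $(b')^{-*}*b=i'*i^{-*}\in B\cap I=\{1_D\}$, whence $b=b'$ and $i=i'$; this yields (5), while (5) trivially gives back $D=B*I$ together with $B\cap I=\{1_D\}$ (any $x$ in the intersection has the two decompositions $x*1_D=1_D*x$). Condition (6) is obtained by running the same argument on $D=I*B$, which equals $B*I$ since $I$ is normal in $(D,*)$. (This is the very same reasoning that gives (1) $\Leftrightarrow$ (2) $\Leftrightarrow$ (3) for $(D,\circ)$.) Finally, for (7) $\Leftrightarrow$ (8): an idempotent digroup (resp.\ skew brace) endomorphism $e$ with image $B$ and kernel $I$ fixes $B$ pointwise (write $b=e(a)$ and use $e^2=e$), so its corestriction to $B$ is a morphism as in (7); conversely, if $f\colon D\to B$ restricts to $\id_B$ and has kernel $I$, then $e:=\iota_B\circ f$, with $\iota_B\colon B\hookrightarrow D$ the inclusion, is a digroup (resp.\ skew brace) endomorphism with $e^2=\iota_B\circ(f\circ\iota_B)\circ f=\iota_B\circ f=e$, image $f(D)=B$ (because $B=f(B)\subseteq f(D)\subseteq B$), and kernel $I$.

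I do not anticipate a real obstacle: each step is either the textbook internal-semidirect-product lemma or a direct consequence of the coset identity. The only point deserving care---and arguably the conceptual core---is the bridge in the second paragraph: it is exactly the ``$a*I=a\circ I$'' clause in the definition of an ideal that makes the $*$- and $\circ$-decompositions interchangeable, so that ``semidirect product of digroups'' is a well-posed notion. I would also just verify that the maps built in the last step (the corestriction of $e$, the composite $\iota_B\circ f$) remain morphisms in the skew brace category, which is clear since they are composites of morphisms there.
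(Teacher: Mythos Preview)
Your proof is correct and follows essentially the same approach as the paper's: the paper also uses the coset identity $a*I=a\circ I$ as the bridge between the two group structures, handles (7)$\Leftrightarrow$(8) exactly as you do, and treats the remaining equivalences by the standard group-theoretic semidirect-product lemma. The only cosmetic difference is that where you deduce (4)$\Leftrightarrow$(1) directly from the set equality $B*I=B\circ I$ and then invoke the already established (1)$\Leftrightarrow$(8), the paper instead proves (4)$\Rightarrow$(8) by constructing the idempotent from the $*$-decomposition and checking it is also a $\circ$-endomorphism (mirroring its earlier argument for (1)$\Rightarrow$(8)); your route is marginally more economical but the mathematical content is the same.
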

 
 \begin{proof} (4)${}\Rightarrow{}$(8)  If (4) holds, the group $(D,*)$ is the semidirect product of its subgroups $B$ and $I$, so that for every element $a\in D$, there are unique $b\in B$ and $i_3\in I $ such that $a=b*i_3$, and there is an idempotent endomorphism $e$ of the group $(D,*)$ such that $e\colon a=b*i_3\mapsto b$. Let us show that $e$ is a group endomorphism of $(D,\circ)$ as well. We must show that $e(a_1\circ a_2)=e(a_1)\circ e(a_2)$. Equivalently, we must prove that $e((b_1*i_1)\circ (b_2*i_2))=b_1\circ b_2$. This holds because $(b_1*I)\circ(b_2*I)=(b_1\circ I)\circ(b_2\circ I)=(b_1\circ b_2)\circ I=(b_1\circ b_2)*I$.
 
 It is now clear that (1)--(6) and (8) are equivalent.

 (7)${}\Rightarrow{}$(8) If there is a digroup morphism $\varphi \colon D\to B$ whose restriction to $B$ is the identity and whose kernel is $I$, and $\varepsilon\colon B\to D$ is the embedding, then $\varepsilon\varphi$ is the required idempotent digroup endomorphism of the digroup $D$.

(8)${}\Rightarrow{}$(7)  If $e$ is an idempotent endomorphism satisfying (8), then the image of $e$ is $B$, and its corestriction $e|^{B}\colon D\to B$ satisfies (7).\end{proof}

We will say that the digroup $D$ is the {\em inner semidirect product} of its subdigroup $B$ and its ideal $I$ if any of the equivalent conditions of Proposition~\ref{0.1} holds.
As a corollary of Proposition~\ref{0.1}, we get that there is a bijection between the set of all idempotent digroup endomorphisms of a digroup $D$ and the set of all pairs $(B,I)$, where $B$ is a  subdigroup of $D$, $I$ is an ideal of $D$ and $D$ is the semidirect product of $B$ and $I$. This bijection associates with each idempotent endomorphism $e$ the pair $(e(D),\ker(e))$.
	
	Notice that we have shown that if $(D,*,\circ)$ is a digroup, $e$ is an idempotent group endomorphism of either $(D,*)$ or $(D,\circ)$, the kernel of $e$ is an ideal of the digroup $(D,*,\circ)$ and the image of $e$ is a subdigroup, then $e$ is an idempotent digroup endomorphism of $(D,*,\circ)$. 
	
	

	\begin{remark}\label{2.3}{\rm In Proposition~\ref{0.1}, we saw that every element $a\in D$ can be written in a unique way in the four forms $a=b\circ i_1=i_2\circ b=b* i_3=i_4* b$, where $b=e(a)$ and $i_1,i_2,i_3,i_4\in I$. Now: 
	
	\begin{enumerate}
		\item[(i)] $b\circ i_1=b* i_3$ implies that $i_3=b^{-*}*(b\circ i_1)=\lambda_b(i_1)$. 
		\item[(ii)]$b\circ i_1=i_2\circ b$ implies that $i_1=b^{-\circ}\circ i_2\circ b$, so that   $i_1=\varphi_{\circ b}(i_2)$, where $\varphi_{\circ}\colon (D,\circ)\to \Aut_\Gp(D,\circ)$ denotes the conjugation in the group $(D,\circ)$. Therefore $i_2=(\varphi_{\circ b})^{-1}(i_1)$.
		\item[(iii)]Similarly $b* i_3=i_4* b$ implies that $i_4=(\varphi_{*b})^{-1}(i_3)$, where $\varphi_{*}\colon (D,*)\to \Aut_\Gp(D,*)$ denotes the conjugation in the group $(D,*)$.  From (i), it follows that $i_4=(\varphi_{* b})^{-1}(\lambda_b(i_1))$. 
	\end{enumerate}
	
	We have thus proved that, in the statement of Proposition~\ref{0.1}, the elements $i_2,i_3,i_4$ depend only on $i_1$, via the formulas \begin{equation}\left\{\begin{array}{l} i_2=(\varphi_{\circ b})^{-1}(i_1) \\ i_3=\lambda_b(i_1) \\ i_4=(\varphi_{* b})^{-1}(\lambda_b(i_1)).\end{array}\right.\label{sistema}\end{equation} Of course, it is possible to show that any of the three elements $i_1,i_2,i_3,i_4$ determines the other three.}
	 \end{remark}
	
\section{Inner semidirect products  of digroups and left skew braces}\label{7}

Suppose we have a digroup $(D,*,\circ)$. We want to describe the semidirect-product decompositions of $D$. In view of what we have seen in Section~\ref{2}, we will study the idempotent digroup endomorphisms $e$ of $D$. Such an idempotent endomorphism $e$ will have a kernel $K$ and an image $Y$, and $D$ will turn out to be a semidirect product $D=Y\ltimes K$. In view of Remark~\ref{2.3},  three mappings immediately appear in a natural way:

(1) The group antihomomorphism $\phi_*\colon (Y, *)\to\Aut_\Gp(K,*)$ related to the semi\-direct-product decomposition $D=Y\ltimes K$ of the group $(D,*)$. It is defined by $\phi_*\colon y\mapsto\phi_{*y}$, where $\phi_{*y}(k)=y^{-*}*k*y$ for every $y\in Y$ and every $k\in K$. It is induced by the conjugation in the group $(D,*)$.

(2) The group antihomomorphism $\phi_\circ\colon (Y, \circ)\to\Aut_\Gp(K,\circ)$ related to the semi\-direct-product decomposition $D=Y\ltimes K$ of the group $(D,\circ)$. It is defined by $\phi_\circ\colon y\mapsto\phi_{\circ y}$, where $\phi_{\circ y}(k)=y^{-\circ}\circ k\circ y$ for every $y\in Y$, $k\in K$. It is induced by the conjugation in the group $(D,\circ)$.

(3) The morphism $\Lambda\colon (Y,1)\to (\Aut_{\Set_*}(K,1),\mathrm{Id})$ in the category $\Set_*$, defined by $\Lambda\colon y\mapsto\Lambda_y$, where $\Lambda_y(k)=y^{-*}*(y\circ k)$ for every $y\in Y$, $k\in K$. It is induced by the morphism $\lambda\colon  (D,1)\to (\Aut_{\Set_*}(D,1),\mathrm{Id})$ ($\lambda$ is a morphism in $\Set_*$), relative to the digroup $(D,*,\circ)$ (Section~\ref{2}). Here $\Aut_{\Set_*}(X,x_0)$ denotes the group of all automorphisms of a pointed set $(X,x_0)$. Each $\Lambda_y$ is a permutation of $K$ because its two-sided inverse is the mapping $K\to K$, $k\mapsto y^{-\circ}\circ(y*k)$. Notice that $\Lambda_1\colon K\to K$ is the identity mapping of $K$ and $\Lambda_y(1)=1$ for every $y\in Y$, i.e., that $\Lambda$ is a morphism $\Lambda\colon (Y,1)\to (\Aut_{\Set_*}(K,1),\mathrm{Id})$ in the category $\Set_*$.

\medskip

When $D$ is a left skew brace, the mapping $\Lambda$ is a group homomorphism $$\Lambda\colon (Y,\circ)\to \Aut_\Gp(K,*).$$

\textbf{Notation:} For any two groups $(Y, \otimes)$ and $K,\otimes)$ and any group antihomomorphism $\phi_\otimes\colon (Y, \otimes)\to\Aut_\Gp(K,\otimes)$ , we will denote by $(Y\ltimes_{\phi_\otimes}, \otimes)$ the semidirect product of $Y$ and $K$ via $\phi_\otimes$, where $ \otimes$ is defined as follows $$(y,k) \otimes (y',k'):=(y\otimes y', \phi_{\otimes y'}(k)\otimes k').$$

\begin{theorem}\label{the truth} Let $(D,*,\circ)$ be any digroup, and suppose that $D$ is the semidirect product of its subdigroup $Y$ and its ideal $K$ (i.e., that the equivalent conditions of Proposition~{\rm \ref{0.1}} are satisfied). Let $\varphi_*$, $\varphi_\circ$ and $\Lambda$ be the three mappings defined above. Then there is a digroup isomorphism $$\alpha\colon (Y\times K,+, \circ)\to (D,*,\circ)$$ defined by $\alpha(y,k)= y\circ k$ for every $y\in Y$ and  every $k\in K$, where the operations on the digroup $(Y\times K,+,\circ)$ are defined by \begin{equation}(y,k)+(y',k')=(y* y', (\Lambda_{y* y'})^{-1}(\phi_{* y'}(\Lambda_y(k))* \Lambda_{y'}(k')))
    \label{A}
\end{equation}  and \begin{equation}(y,k) \circ(y',k')=(y\circ y', \phi_{\circ y'}(k)\circ k')\label{B}
\end{equation} for every $(y,k),(y',k')\in Y\times K$. \end{theorem}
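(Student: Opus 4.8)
The plan is to verify directly that the map $\alpha\colon (Y\times K,+,\circ)\to (D,*,\circ)$, $\alpha(y,k)=y\circ k$, is a digroup isomorphism, where the two operations $+$ and $\circ$ on $Y\times K$ are those prescribed by \eqref{A} and \eqref{B}. By Proposition~\ref{0.1} and the remarks preceding it, $D$ is simultaneously the (internal) semidirect product of $Y$ and $K$ with respect to $(D,\circ)$ and with respect to $(D,*)$; in particular every element of $D$ has a unique expression $y\circ k$ with $y\in Y$, $k\in K$, so $\alpha$ is a bijection of sets fixing $1_D$. It remains to check that $\alpha$ respects each of the two operations.

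First I would handle the $\circ$-operation, which is the easy half. Since $(D,\circ)$ is the internal semidirect product of $Y$ and $K$ and $\phi_\circ$ is precisely the conjugation antihomomorphism $\phi_{\circ y}(k)=y^{-\circ}\circ k\circ y$, the formula $\alpha((y,k)\circ(y',k'))=\alpha(y\circ y',\phi_{\circ y'}(k)\circ k')=(y\circ y')\circ(y'^{-\circ}\circ k\circ y')\circ k'=(y\circ k)\circ(y'\circ k')=\alpha(y,k)\circ\alpha(y',k')$ is immediate; this is nothing but the standard identification of an internal semidirect product of groups with the external one, and it also shows $(Y\times K,\circ)$ is a group.

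Next comes the $*$-operation, which is where the real work lies, since the defining formula \eqref{A} for $+$ involves the interplay between the two group structures via the permutations $\Lambda_y$. The strategy is to push the product $(y\circ k)*(y'\circ k')$ in $D$ into the form $y''\circ k''$ with $y''\in Y$, $k''\in K$, and read off $y''$ and $k''$. One computes $(y\circ k)*(y'\circ k')$ by first rewriting each factor $y\circ k$ in its unique $*$-form: by Remark~\ref{2.3}(i), $y\circ k=y*\Lambda_y(k)$, so $(y\circ k)*(y'\circ k')=y*\Lambda_y(k)*y'*\Lambda_{y'}(k')$. Inserting $y'^{-*}*y'$ and using the conjugation antihomomorphism $\phi_{*y'}(x)=y'^{-*}*x*y'$, this becomes $(y*y')*\phi_{*y'}(\Lambda_y(k))*\Lambda_{y'}(k')$. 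Writing $y*y'$ for the $Y$-component (legitimate since $Y$ is a subdigroup, hence closed under $*$) and translating back from the $*$-form to the $\circ$-form via Remark~\ref{2.3}(i) applied to $y*y'$ — i.e.\ $(y*y')*z=(y*y')\circ(\Lambda_{y*y'})^{-1}(z)$ — one obtains exactly $\alpha$ of the right-hand side of \eqref{A}. Hence $\alpha((y,k)+(y',k'))=\alpha(y,k)*\alpha(y',k')$.

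Finally, one notes that once $\alpha$ is a bijection respecting both operations, $(Y\times K,+,\circ)$ is automatically a digroup (the group axioms for $+$ and for $\circ$ and the common-identity condition are transported back from $(D,*,\circ)$ through the bijection $\alpha$), so no separate verification of associativity, inverses, or the shared identity for $+$ is needed. The main obstacle is purely bookkeeping: keeping straight the four $*/\circ$ normal forms of Remark~\ref{2.3} and the two conjugation antihomomorphisms while manipulating the mixed expression $y*\Lambda_y(k)*y'*\Lambda_{y'}(k')$; there is no conceptual difficulty, only the need to apply $\lambda_{y*y'}^{-1}=\Lambda_{y*y'}^{-1}$ at the correct moment to land back in the $\circ$-coordinates.
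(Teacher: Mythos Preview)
Your argument is correct and follows essentially the same route as the paper: both verify that $\alpha$ is a $\circ$-isomorphism by the standard semidirect-product computation, and both handle the $*$-compatibility via the key identity $y\circ k=y*\Lambda_y(k)$ (the paper packages this as the factorisation $\alpha=\gamma\beta$ with $\beta(y,k)=(y,\Lambda_y(k))$, while you compute directly in $D$). The only organisational difference is that the paper first checks by hand that $(Y\times K,+)$ is a group and then shows $\alpha$ is a homomorphism, whereas you transport the group structure back through the bijection $\alpha$---your version is a bit more economical.
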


\begin{proof} It is well known that $(Y\times K,  \circ)$ is a group. It is straightforward to see that $(Y\times K, +)$ is a monoid. Moreover the inverse of every element $(y,k)\in Y\times K$ is given by $$-(y,k)=(y^{-*}, (\Lambda_{y^{-*}})^{-1}((\phi_{*y^{-*}})(\Lambda_y(k)))^{-*})
,$$ because 
	\begin{equation*}
		\begin{split}
			(y,k)&+(y^{-*}, (\Lambda_{y^{-*}})^{-1}((\phi_{*y})^{-1}(\Lambda_y(k)))^{-*})=\\&= (1,\phi_{* y^{-*}}(\Lambda_y(k))* \Lambda_{y^{-*}}(\Lambda_{y^{-*}})^{-1}((\phi_{*y})^{-1}(\Lambda_y(k)))^{-*})\\&=(1,\phi_{* y^{-*}}(\Lambda_y(k))* ((\phi_{*y})^{-1}(\Lambda_y(k)))^{-*})\\&= (1,\phi_{* y^{-*}}(\Lambda_y(k))* (\phi_{* y^{-*}}(\Lambda_y(k)))^{-*})=(1,1).
		\end{split}
	\end{equation*} 
	Thus $(Y\times K, +,  \circ)$ is a digroup.
	
 Every element of $D$ can be written in a unique way in the form $y\circ k$ with $y\in Y$ and $k\in K$. Hence $\alpha\colon Y\times K\to D$ defined by $\alpha(y,k)= y\circ k$ for every $y\in Y$ and $k\in K$ is a bijection. 

The bijection $\alpha$ is a group isomorphism $(Y\ltimes_{\phi_\circ} K, \circ)\to (D,\circ)$, because 
\begin{equation*}
	\begin{split}
		\alpha(y,k)\circ\alpha(y',k')&= y\circ k\circ y'\circ k'\\&
		=y \circ y'\circ(y')^{-\circ}\circ k\circ y'\circ k'\\&
		=y\circ y'\circ\phi_{\circ y'}(k) \circ k'\\&
		=\alpha (y\circ y',\phi_{\circ y'}(k) \circ k')\\&
		=\alpha((y,k) \circ(y',k')).
	\end{split}
\end{equation*}

It remains to show that the bijection $\alpha$ is also a group homomorphism $$\alpha\colon(Y\times K,+)\to (D,*).$$ To this end, decompose the bijection $\alpha\colon Y\times K\to D$, $\alpha(y,k)= y\circ k=y*\Lambda_y(k)$ as the composite mapping of two bijections: $\beta\colon Y\times K\to Y\times K$ and the group isomorphism $\gamma\colon(Y \ltimes{\phi_*} K,\otimes*)\to (D,*)$. Define the mapping $\beta\colon Y\times K\to Y\times K$ setting $\beta(y, k)=(y,\Lambda_y(k))$ for every $(y,k)\in Y\times K$. Then $\beta$ is a bijection, because the mapping $Y\times K\to Y\times K$, $(y,k)\mapsto (y,(\Lambda_y)^{-1}(k))$ is a two-sided inverse for $\beta$. The mapping $$\gamma\colon(Y\ltimes_{\phi_*} K,\otimes*)\to (D,*)$$ is defined by $\gamma(y,k)=y*k$ and is an isomorphism. The following commutative diagram of sets and mappings represents our setting:
\begin{equation}\xymatrix{(Y\ltimes_{\phi_*} K,\otimes*)\ar[r]^{\gamma} & (D,*)\\
(Y\times K,+)\ar[u]^\beta\ar[ur]_\alpha} \qquad
\xymatrix{(y,\Lambda_y(k))\ar[r]^{\gamma\ \ \ \ \ } & y\circ k=y*\Lambda_y(k)\\
(y,k).\ar[u]^\beta\ar[ur]_\alpha}
    \label{viu}
\end{equation}

It is easy to see that $\alpha=\gamma\beta$. Hence, in order to show that $\alpha$ is a group homomorphism $(Y\times K,+)\to (D,*)$, it suffices to show that the bijection $\beta$ is a group homomorphism $(Y\times K,+)\to (Y\ltimes_{\phi_*} K, *)$. 
Now
\begin{equation*}
	\begin{split}
		\beta((y,k)+(y',k'))&=\beta(y* y', (\Lambda_{y* y'})^{-1}(\phi_{* y'}(\Lambda_y(k))* \Lambda_{y'}(k')))\\&=(y* y', \phi_{* y'}(\Lambda_y(k))* 			\Lambda_{y'}(k'))\\&=(y, \Lambda_y(k)) *(y', \Lambda_{y'}(k'))\\&= \beta(y,k)*\beta(y',k').
	\end{split}
\end{equation*}

This concludes the proof.\end{proof}

Notice that the commutativity of diagram  (\ref{viu}) simply states that $y*\Lambda_y(k)=y\circ k$ for every $y\in Y$, $k\in K$.

\medskip

\begin{remark}{\rm The operations $+$ and $ \circ$ in the statement of Theorem~\ref{the truth} may seem quite complicate, but their naturality appears as soon as we look at how they define the products between an element of the form $(y,1)$ and an element of the form $(k,1)$. In fact, it is very easy to see, from (\ref{A}) and (\ref{B}), that for every $y\in Y$ and every $k\in K$: \begin{equation}\begin{array}{l} (y,1) \circ (1,k)=(y,k) \\ (1,k) \circ(y,1)=(y,\phi_{\circ y}(k)) \\
(y,1)+(1,k)=(y,(\Lambda_y)^{-1}(k)) \\ (1,k)+(y,1)=(y,(\Lambda_y)^{-1}\phi_{*y}(k)).\end{array}\label{ppp}\end{equation} Of these four identities, the first is a trivial identity, the second allows to define the antihomomorphism $\phi_\circ$, the third allows to define the morphism $\Lambda$, and the fourth, in view of the third, allows to define the antihomomorphism $\phi_*$. Clearly, the last three identities in (\ref{ppp}) correspond to the identities  in (\ref{sistema}). Notice that the elements $(y,1)$ and $(1,k)$ generate the digroup $(Y\times K,+,\circ)$, so that the four identities in (\ref{ppp}) completely determine the formulas (\ref{A}) and (\ref{B}) for the operations in $Y\times K$.}\end{remark}

\begin{proposition}\label{vhil} For the digroup $(Y\times K,+, \circ)$, semidirect product of $Y$ and $K$ in the notation above, we have that $$\lambda^{Y\times K}_{(y,k)}(y',k')=(\lambda^Y_y(y'), ((\Lambda_{\lambda^Y_y(y')})^{-1}(\phi_{*(\lambda^Y_y(y'))}(\Lambda_y(k))))^{-*}* \Lambda_{y\circ y'}(\phi_{\circ y'}(k)\circ k')).$$
\end{proposition}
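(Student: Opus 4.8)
The plan is to compute $\lambda^{Y\times K}_{(y,k)}(y',k')$ directly from its definition $\lambda^{D}_{a}(b)=a^{-+}+(a\circ b)$, applied to the digroup $D=(Y\times K,+,\circ)$. So first I would compute the $+$-inverse $-(y,k)$, which is already recorded in the proof of Theorem~\ref{the truth}: namely $-(y,k)=(y^{-*},(\Lambda_{y^{-*}})^{-1}((\phi_{*y^{-*}})(\Lambda_y(k)))^{-*})$. Then I would compute the $\circ$-product $(y,k)\circ(y',k')=(y\circ y',\phi_{\circ y'}(k)\circ k')$ using formula~(\ref{B}). Finally I would add these two elements using formula~(\ref{A}) and simplify. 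The first coordinate is immediate: it is $y^{-*}*(y\circ y')=\lambda^Y_y(y')$, so the only real work is in the second coordinate.

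For the second coordinate, write $a=(y^{-*}, \ell)$ with $\ell=(\Lambda_{y^{-*}})^{-1}((\phi_{*y^{-*}})(\Lambda_y(k)))^{-*}$, and $b=(y\circ y', m)$ with $m=\phi_{\circ y'}(k)\circ k'$. Plugging into (\ref{A}), the second coordinate of $a+b$ is
$$(\Lambda_{y^{-*}*(y\circ y')})^{-1}\bigl(\phi_{*(y\circ y')}(\Lambda_{y^{-*}}(\ell))*\Lambda_{y\circ y'}(m)\bigr).$$
Here $y^{-*}*(y\circ y')=\lambda^Y_y(y')$, so $\Lambda_{y^{-*}*(y\circ y')}=\Lambda_{\lambda^Y_y(y')}$. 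The term $\Lambda_{y^{-*}}(\ell)=((\phi_{*y^{-*}})(\Lambda_y(k)))^{-*}$ by definition of $\ell$. So the argument of $(\Lambda_{\lambda^Y_y(y')})^{-1}$ becomes
$$\phi_{*(y\circ y')}\bigl(((\phi_{*y^{-*}})(\Lambda_y(k)))^{-*}\bigr)*\Lambda_{y\circ y'}(\phi_{\circ y'}(k)\circ k').$$
Since each $\phi_{*z}$ is a group automorphism of $(K,*)$, it commutes with taking $*$-inverses, giving $\bigl(\phi_{*(y\circ y')}(\phi_{*y^{-*}}(\Lambda_y(k)))\bigr)^{-*}$. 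Now $\phi_*$ is a group antihomomorphism $(D,*)\to\Aut_\Gp(K,*)$ (it comes from conjugation in $(D,*)$), so $\phi_{*(y\circ y')}\circ\phi_{*y^{-*}}=\phi_{*(y^{-*}*(y\circ y'))}=\phi_{*\lambda^Y_y(y')}$. This collapses the expression to $(\phi_{*\lambda^Y_y(y')}(\Lambda_y(k)))^{-*}*\Lambda_{y\circ y'}(\phi_{\circ y'}(k)\circ k')$, and applying $(\Lambda_{\lambda^Y_y(y')})^{-1}$ yields exactly the claimed formula, once one observes that $(\Lambda_{\lambda^Y_y(y')})^{-1}$ of a $*$-product is the $*$-product of the values of $(\Lambda_{\lambda^Y_y(y')})^{-1}$ only if $\Lambda_{\lambda^Y_y(y')}$ is a group automorphism — which need not hold for a general digroup. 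Thus I would instead keep the whole expression inside a single application of $(\Lambda_{\lambda^Y_y(y')})^{-1}$: the second coordinate is
$$(\Lambda_{\lambda^Y_y(y')})^{-1}\Bigl((\phi_{*\lambda^Y_y(y')}(\Lambda_y(k)))^{-*}*\Lambda_{y\circ y'}(\phi_{\circ y'}(k)\circ k')\Bigr),$$
and then I would note that the stated formula is a rewriting of this, reading $((\Lambda_{\lambda^Y_y(y')})^{-1}(\phi_{*(\lambda^Y_y(y'))}(\Lambda_y(k))))^{-*}$ as the value — which is legitimate precisely when we further use that $\Lambda$ distributes appropriately; in the left-skew-brace case $\Lambda_{\lambda^Y_y(y')}$ is a group automorphism of $(K,*)$ and everything is clean, while in the general digroup case one should read the right-hand side of the Proposition with the convention that $(\Lambda_z)^{-1}$ is applied to the whole bracket. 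I would double-check this bookkeeping carefully, since it is the only subtle point.

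The main obstacle I expect is exactly this last normalization: deciding how the inverse $(\Lambda_{\lambda^Y_y(y')})^{-1}$ interacts with the $*$-product and the $-*$ inverse, and making sure the final typeset formula matches. Everything else is a mechanical substitution of (\ref{A}) and (\ref{B}) together with three structural facts already available: $\phi_*$ is a $*$-conjugation antihomomorphism, each $\phi_{*z}$ and each $\Lambda_z$ is a bijection fixing $1$, and $y^{-*}*(y\circ y')=\lambda^Y_y(y')$. I would present the computation as a short displayed chain of equalities for the second coordinate, citing Remark~\ref{2.3} and the notation set up just before Theorem~\ref{the truth} for the definitions of $\phi_*$, $\phi_\circ$, $\Lambda$, and citing the formula for $-(y,k)$ from the proof of Theorem~\ref{the truth}.
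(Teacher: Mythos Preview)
Your approach is exactly the paper's: compute $-(y,k)+((y,k)\circ(y',k'))$ by substituting the formula for $-(y,k)$ from the proof of Theorem~\ref{the truth}, formula~(\ref{B}) for the $\circ$-product, and formula~(\ref{A}) for the sum, then simplify using that $\phi_*$ is a $*$-antihomomorphism so that $\phi_{*(y\circ y')}\circ\phi_{*y^{-*}}=\phi_{*\lambda^Y_y(y')}$. The chain of equalities you outline is line-for-line the paper's displayed computation.

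Your caution about the last step is justified and is in fact sharper than the paper. Formula~(\ref{A}) yields the second coordinate as $(\Lambda_{\lambda^Y_y(y')})^{-1}$ applied to the \emph{entire} product
\[
(\phi_{*\lambda^Y_y(y')}(\Lambda_y(k)))^{-*}*\Lambda_{y\circ y'}(\phi_{\circ y'}(k)\circ k'),
\]
exactly as you write. The paper's own proof already in its third displayed line places the factor $\Lambda_{y\circ y'}(\phi_{\circ y'}(k)\circ k')$ \emph{outside} the scope of $(\Lambda_{\lambda^Y_y(y')})^{-1}$, and the statement of the Proposition further moves the ${}^{-*}$ outside. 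These rewritings are literally valid only when $\Lambda_{\lambda^Y_y(y')}$ is an automorphism of $(K,*)$, i.e.\ in the skew-brace situation; for a general digroup $\Lambda$ is only a pointed-set morphism. So your version, keeping $(\Lambda_z)^{-1}$ applied to the whole bracket, is the correct formula for arbitrary digroups, and you are right to flag the bookkeeping issue rather than silently ``distribute'' $(\Lambda_z)^{-1}$.
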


\begin{proof} $$\begin{array}{l}
\lambda^{Y\times K}_{(y,k)}(y',k')=-(y,k)+((y,k) \circ(y',k'))= 
\\ \qquad=\left(y^{-*}, \Lambda_{y^{-*}}^{-1}((\phi_{*y})^{-1}(\Lambda_y(k))^{-*})\right)+(y\circ y', \phi_{\circ y'}(k)\circ k')= 
\\ \qquad=\Bigl(y^{-*}*(y\circ y'), \Lambda_{\lambda^Y_y(y')}^{-1}\bigl(\phi_{*(y\circ y')}(\Lambda_{y^{-*}}(\Lambda_{y^{-*}}^{-1}((\phi_{*y})^{-1}(\Lambda_y(k))^{-*})))\bigr)* \\ \qquad\qquad * \Lambda_{y\circ y'}\bigl(\phi_{\circ y'}(k)\circ k'\bigr)\Bigr)=
\\ \qquad=\Bigl(\lambda^Y_y(y'), \Lambda_{\lambda^Y_y(y')}^{-1}\bigl(\phi_{*(y\circ y')}((\phi_{*y^{-*}})(\Lambda_y(k))^{-*})))\bigr)*\Lambda_{y\circ y'}\bigl(\phi_{\circ y'}(k)\circ k'\bigr)\Bigr)=
\\ \qquad =\Bigl(\lambda^Y_y(y'), \Lambda_{\lambda^Y_y(y')}^{-1}(\phi_{*(\lambda^Y_y(y'))}(\Lambda_y(k))^{-*})* \Lambda_{y\circ y'}(\phi_{\circ y'}(k)\circ k')\Bigr).\end{array}$$
\end{proof}

\begin{remark}\label{bjip} {\rm We are very grateful to Thomas Letourmy, who has found a mistake in a remark in a previous version of this paper. He showed that our remark was wrong, giving the following example. Consider the left skew brace $\Z\times\Z$ with operations defined by 
$$\begin{array}{l}(y,k)*(y',k')=(y+y',k+(-1)^yk') \\ (y,k)\circ (y',k')=(y+y',(-1)^{y'}k+(-1)^yk').\end{array}$$ This left skew brace has an idempotent endomorphism $e\colon (y,k)\mapsto (y,0)$. 
Thus $\Z\times\Z$  is the semidirect product of its subdigroup $Y=\Z\times\{0\}$ and its ideal $K=\ker(e)=\{0\}\times \Z$. Observe that in this case $$\phi_{*y}(k)=(y,0)^{-*}*(0,k)*(y,0)=(-1)^{-y}k,$$ $$ \phi_{\circ y}(k)=(y,0)^{-\circ}\circ(0,k)\circ (y,0)=k$$ and $$\Lambda_{y}(k)=(y,0)^{-*}*((y,0)\circ (0,k))=k.$$
Following Theorem~\ref{the truth}, there exists a skew brace isomorphism $$\alpha\colon (\Z\times \Z, +, \circ)\to (\Z\times \Z, *,\circ)$$ where $$(y,k)+(y',k')=(y+y',(-1)^y k+k')$$ and $$(y,k) \circ(y',k')=(y,k)\oplus (y',k')=(y+y',k+k')$$ The isomorphism $\alpha$ is given by $\alpha(y,k)=(y,(-1)^y k)$ and the commutative diagram (\ref{viu}) becomes in this example the diagram \begin{equation*}\xymatrix{(\Z\ltimes_{\phi_*} \Z, *)\ar[r]^{\gamma} & (\Z\times \Z,*)\\
(\Z\times \Z,+)\ar[u]^\beta\ar[ur]_\alpha} \qquad
\xymatrix{(y,k)\ar[r]^{\gamma\ \ \ \ \ } & (y,(-1)^yk)\\
(y,k).\ar[u]^\beta\ar[ur]_\alpha}
    \end{equation*}}\end{remark}

\section{Outer semidirect product of left skew braces}

Let us construct now the outer semidirect product of two digroups. 

\begin{theorem}\label{thm2} Let $(Y,*,\circ)$ and $(K,*,\circ)$ be digroups and suppose that there are  two group antihomomorphisms $\phi_*\colon (Y, *)\to\Aut_\Gp(K,*)$ and $\phi_\circ\colon (Y, \circ)\to\Aut_\Gp(K,\circ)$, and a morphism $\Lambda\colon (Y,1)\to (S_K,\mathrm{Id})$ in $\Set_*$. On the cartesian product $Y\times K$ define two operations via $$(y,k)+(y',k')=(y* y', (\Lambda_{y* y'})^{-1}(\phi_{* y'}(\Lambda_y(k))* \Lambda_{y'}(k')))$$ 
and $$(y,k) \circ(y',k')=(y\circ y', \phi_{\circ y'}(k)\circ k')$$ for every $y,y'\in Y$ and $k,k'\in K$. Then $(Y\times K,+,\circ)$ is a digroup.\end{theorem}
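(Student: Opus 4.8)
The plan is to reduce Theorem~\ref{thm2} to the already-established inner-semidirect-product result, Theorem~\ref{the truth}, rather than verifying the group and monoid axioms by brute force. First I would observe that the second operation $\circ$ on $Y\times K$ is literally the semidirect-product operation $(Y\ltimes_{\phi_\circ}K,\circ)$ of the two groups $(Y,\circ)$ and $(K,\circ)$ along the antihomomorphism $\phi_\circ$, so $(Y\times K,\circ)$ is a group with no further work. The real content is that $(Y\times K,+)$ is a group with the \emph{same identity} $(1,1)$ as $(Y\times K,\circ)$.

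For the operation $+$, the key step is to untwist it exactly as in the proof of Theorem~\ref{the truth}: define $\beta\colon Y\times K\to Y\times K$ by $\beta(y,k)=(y,\Lambda_y(k))$. Since each $\Lambda_y$ is a bijection of $K$ fixing $1$ and $\Lambda_1=\id_K$, the map $\beta$ is a bijection of the pointed set $(Y\times K,(1,1))$ with inverse $(y,k)\mapsto(y,(\Lambda_y)^{-1}(k))$. The defining formula for $+$ has been engineered precisely so that $\beta$ transports it to the ordinary group law of $(Y\ltimes_{\phi_*}K,*)$; concretely, the computation
\begin{equation*}
\beta\bigl((y,k)+(y',k')\bigr)=\bigl(y*y',\ \phi_{*y'}(\Lambda_y(k))*\Lambda_{y'}(k')\bigr)=(y,\Lambda_y(k))*(y',\Lambda_{y'}(k'))=\beta(y,k)*\beta(y',k')
\end{equation*}
is the same one already carried out in the proof of Theorem~\ref{the truth}, and it uses nothing about $Y$ and $K$ beyond their being groups under $*$ and $\phi_*$ being an antihomomorphism into $\Aut_\Gp(K,*)$. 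Hence $\beta$ is an isomorphism of magmas from $(Y\times K,+)$ onto the genuine group $(Y\ltimes_{\phi_*}K,*)$, so $(Y\times K,+)$ is itself a group. Since $\beta$ fixes $(1,1)$ and the identity of $(Y\ltimes_{\phi_*}K,*)$ is $(1,1)$, the identity of $(Y\times K,+)$ is $(1,1)$, which is also the identity of $(Y\times K,\circ)$. Therefore $(Y\times K,+,\circ)$ is a pair of groups with a common identity, i.e.\ a digroup.

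I do not anticipate a genuine obstacle here: the whole point is that the awkward-looking formula for $+$ is a conjugate, via the pointwise-$\Lambda$ twist $\beta$, of the standard semidirect product under $*$, and this is exactly the observation already exploited in Theorem~\ref{the truth}. The only thing to be careful about is bookkeeping: one should note explicitly that $\Lambda\colon(Y,1)\to(S_K,\mathrm{Id})$ being a morphism of pointed sets is precisely what guarantees $\Lambda_1=\id_K$ and $\Lambda_y(1)=1$, which is what makes $\beta$ well defined as a bijection and makes it fix $(1,1)$; and that, unlike in Theorem~\ref{the truth}, here no hypothesis relating $\Lambda$ to $*$ or $\circ$ is needed, because we are only asserting the digroup (two groups, one shared identity) structure and not any skew-brace identity. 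If one prefers to avoid invoking Theorem~\ref{the truth}, the displayed $\beta$-computation together with the inverse formula $-(y,k)=\bigl(y^{-*},(\Lambda_{y^{-*}})^{-1}((\phi_{*y^{-*}}(\Lambda_y(k)))^{-*})\bigr)$ from its proof can simply be repeated verbatim; either way the argument is short.
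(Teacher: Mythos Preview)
Your proposal is correct and follows essentially the same route as the paper: note that $(Y\times K,\circ)$ is the usual semidirect product via $\phi_\circ$, then show that the bijection $\beta(y,k)=(y,\Lambda_y(k))$ is a magma isomorphism from $(Y\times K,+)$ onto the semidirect product $(Y\ltimes_{\phi_*}K,*)$, so that $+$ is a group law with identity $(1,1)$. One small quibble: the hypothesis $\Lambda\colon(Y,1)\to(S_K,\mathrm{Id})$ in $\Set_*$ only gives $\Lambda_1=\id_K$, not $\Lambda_y(1)=1$ for all $y$; fortunately your argument only uses the former (to get $\beta(1,1)=(1,1)$), so nothing is lost.
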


\begin{proof}It is well known that $(Y\times K, \circ)$ is a group, the semidirect product of the groups $(Y,\circ)$ and $(K,\circ)$. In order to show that the magma $(Y\times K,+)$ is a group, it suffices to show that the mapping $\beta\colon Y\times K\to Y\times K$ defined by $\beta(y, k)=(y,\Lambda_y(k))$ for every $(y,k)\in Y\times K$ is a magma isomorphism between $(Y\times K,+)$ and the group $(Y\times K, *)$, semidirect product of the groups $(Y,*)$ and $(K,*)$ via the group antihomomorphism $\phi_*$.
This is now easy.
\end{proof}

\begin{proposition}\label{hl} Let $(Y,*,\circ)$ and $(K,*,\circ)$ be digroups and suppose that there are  two group antihomomorphisms $\phi_*\colon (Y, *)\to\Aut_\Gp(K,*)$ and $\phi_\circ\colon (Y, \circ)\to\Aut_\Gp(K,\circ)$, and a morphism $\Lambda\colon (Y,1)\to (S_K,\mathrm{Id})$ in $\Set_*$. Then $(Y\times K,+, \circ)$ with the operations defined in Theorem~{\rm \ref{thm2} }is a left skew brace if and only if $Y$ and $K$ are left skew braces, $\Lambda$ is a group homomorphism $\Lambda\colon (Y,\circ)\to\Aut_\Gp(K,*)$, and $$\begin{array}{l}
        \phi_{\circ (y'*y'')}(k)\circ (\Lambda_{y'*y''})^{-1}\bigl(\phi_{*y''}(\Lambda_{y'}(k'))*\Lambda_{y''}(k'')\bigr) = \\ \quad=
        (\Lambda_{y\circ (y'*y'')})^{-1}\bigl(\phi_{*\lambda_{y}^Y(y'')}(\Lambda_{y\circ y'}(\phi_{\circ y'}(k)\circ k')*\Lambda_{y}(k)^{-*})\bigr)*\Lambda_{y\circ y''}(\phi_{\circ y''}(k)\circ k'')
    \end{array}$$
\end{proposition}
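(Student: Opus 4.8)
The plan is to analyze the skew brace identity \eqref{lsb} on the digroup $(Y\times K,+,\circ)$ directly, exploiting the two reductions already available in the paper: (i) by Theorem~\ref{the truth} (or rather its outer analogue, Theorem~\ref{thm2}) we already know $(Y\times K,+,\circ)$ is a digroup, so the only thing left to characterize is when \eqref{lsb} holds; and (ii) by the discussion in Section~\ref{2}, $(D,*,\circ)$ is a left skew brace precisely when each $\lambda^D_a$ is a group automorphism of $(D,+)$, i.e.\ when $\lambda^{Y\times K}_{(y,k)}$ is additive. The formula for $\lambda^{Y\times K}_{(y,k)}$ is exactly what Proposition~\ref{vhil} computes (its proof only uses that $Y\times K$ is a digroup, so it applies verbatim in the outer setting), so I would take that formula as the starting point.

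First I would write out the condition that $\lambda^{Y\times K}_{(y,k)}$ respects $+$, namely
\[
\lambda^{Y\times K}_{(y,k)}\bigl((y',k')+(y'',k'')\bigr)=\lambda^{Y\times K}_{(y,k)}(y',k')+\lambda^{Y\times K}_{(y,k)}(y'',k''),
\]
and compare the two sides componentwise using the explicit formula from Proposition~\ref{vhil} and the definition \eqref{A} of $+$. The first ($Y$-)component of this equation reduces to $\lambda^Y_y(y'*y'')=\lambda^Y_y(y')*\lambda^Y_y(y'')$ for all $y,y',y''\in Y$, which says exactly that $\lambda^Y$ is additive, i.e.\ that $Y$ is a left skew brace. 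Granting that, the second ($K$-)component becomes the displayed equation in the statement, once one also knows that $\Lambda$ is a group homomorphism $(Y,\circ)\to\Aut_\Gp(K,*)$ and that $K$ is a left skew brace. So the bulk of the work is to show: the $K$-component identity, \emph{together with} the digroup structure, forces $K$ to be a left skew brace and $\Lambda$ to be multiplicative — and conversely these plus the displayed identity give \eqref{lsb}.

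To extract ``$K$ is a skew brace'' and ``$\Lambda$ is a homomorphism'' I would specialize the $K$-component identity (or the full additivity of $\lambda^{Y\times K}$) at convenient arguments. Setting $y'=y''=1$ and varying $k$ should isolate the behaviour of $\Lambda$ on products $y\circ(1)$ versus... more usefully, taking $y=1$ collapses all the $\phi$'s and $\Lambda_{y\circ-}$ terms in a controlled way; taking $k=1$ (so $\Lambda_y(k)=1$ and $\phi_{\circ\bullet}(k)=1$) should reduce the $K$-component identity to a statement purely about $K$ and $\Lambda$, from which additivity of $\lambda^K$ (i.e.\ $K$ a skew brace) and $\Lambda_{y\circ y'}=\Lambda_y\circ\Lambda_{y'}$ can be read off by further specialization. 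Conversely, assuming all three conditions, I would substitute them back and verify the displayed identity is an algebraic consequence — this is the ``if'' direction and is the cleaner half, essentially a matter of pushing the homomorphism property of $\Lambda$ and the skew-brace identities of $Y$ and $K$ through the formula.

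The main obstacle I anticipate is purely computational bookkeeping: the formulas for $+$ and for $\lambda^{Y\times K}$ are deeply nested in $\Lambda$, $\phi_*$, $\phi_\circ$ and their inverses, so equating the $K$-components produces a long expression in which one must repeatedly use that $\phi_{*\bullet}$ is an antihomomorphism into $\Aut_\Gp(K,*)$ (hence commutes with taking $-*$-inverses), that $\Lambda_y$ is not in general a group map of $(K,*)$ until one proves it is, and that $\lambda^Y_y(y'*y'')$ rewrites via $Y$ being a skew brace. Keeping track of which automorphisms have already been shown to be group automorphisms — to avoid a circular argument — is the delicate point: I would therefore be careful to prove additivity of the first component ($Y$ a skew brace) and the homomorphism property of $\Lambda$ \emph{before} using them to simplify the $K$-component, and to prove $K$ is a skew brace from the $k'$-, $k''$-dependence at $k=1$, so that the remaining identity is exactly the stated one with no hidden hypotheses. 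Once the correct order of deductions is fixed, each individual step is routine group-theoretic manipulation.
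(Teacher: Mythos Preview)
Your approach is correct and is essentially the same as the paper's, just reorganized through the equivalent characterization ``$\lambda^{Y\times K}_{(y,k)}$ is $+$-additive'' in place of the identity \eqref{lsb} directly; the underlying computation (expand both sides, match $Y$-components to get the skew-brace condition on $Y$, then read off the $K$-component) is identical, and your use of Proposition~\ref{vhil} in the outer setting is legitimate since its proof only uses the digroup formulas from Theorem~\ref{thm2}. The one place you are more thorough than the paper is the ``only if'' direction: the paper simply asserts that if $Y\times K$ is a left skew brace then $Y$, $K$ are left skew braces and $\Lambda\colon(Y,\circ)\to\Aut_\Gp(K,*)$ is a homomorphism (relying implicitly on $Y\cong Y\times\{1\}$ and $K\cong\{1\}\times K$ being sub-skew-braces and on $\lambda^{Y\times K}_{(y,1)}|_{\{1\}\times K}=(1,\Lambda_y(\cdot))$), whereas you propose to extract these by specialization---either route works and yields the same displayed identity for the remaining $K$-component.
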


\begin{proof}
    By Theorem \ref{thm2} $(Y\times K,+, \circ)$ is a digroup.  If $Y\times K$ is a left skew brace, then so are $Y$ and $K$, and $\Lambda\colon (Y,\circ)\to \Aut_\Gp(K,*)$ is a group homomorphism. Hence assume that these three hypotheses hold ($Y$ and $K$ are left skew braces, and $\Lambda\colon (Y,\circ)\to \Aut_\Gp(K,*)$ is a group homomorphism). Then
$Y\times K$ is a left skew brace if and only if $$(y,k) \circ((y',k')+ (y'',k''))=(y,k) \circ (y',k')-(y,k)+(y,k) \circ (y'',k'').$$

    For the left hand side we have $$\begin{array}{l}            (y,k) \circ\bigl((y',k')  + (y'',k'')\bigr)= \\ \qquad=(y,k) \circ \Bigl(y'* y'',(\Lambda_{y'*y''})^{-1}\bigl(\phi_{*y''}(\Lambda_{y'}(k'))*\Lambda_{y''}(k'')\bigr)\Bigr)=\\ \qquad= 
            \Bigl(y\circ(y'*y''),\phi_{\circ (y'*y'')}(k)\circ (\Lambda_{y'*y''})^{-1}\bigl(\phi_{*y''}(\Lambda_{y'}(k'))*\Lambda_{y''}(k'')\bigr)\Bigr).
        \end{array}$$
    For the right hand side we have 
    \begin{equation*}
        \begin{split}
            (y,&k) \circ(y',k')-(y,k)+(y,k) \circ (y'',k'')=
            \\& \bigl(y\circ y',\phi_{\circ y'}(k)\circ k'\bigr)+\bigl(y^{-*}, (\Lambda_{y^{-*}})^{-1}(\phi_{*y^{-*}}(\Lambda_{y}(k))^{-*})\bigr)+(y\circ y'',\phi_{\circ y''}(k)\circ k'')=
            \\& \Bigl((y\circ y')* y^{-*}, (\Lambda_{(y\circ y')* y^{-*}})^{-1}\bigl(\phi_{* y^{-*}}(\Lambda_{y\circ y'}(\phi_{\circ y'}(k)\circ k'))* \\ & \qquad \qquad \qquad *\Lambda_{y^{-*}}((\Lambda_{y^{-*}})^{-1}(\phi_{*y^{-*}}(\Lambda_{y}(k))^{-*}))\bigr)\Bigr)+ \bigl(y\circ y'',\phi_{\circ y''}(k)\circ k''\bigr)=
            \\& \Bigl((y\circ y')* y^{-*}, (\Lambda_{(y\circ y')* y^{-*}})^{-1}\bigl(\phi_{* y^{-*}}(\Lambda_{y\circ y'}(\phi_{\circ y'}(k)\circ k')*\Lambda_{y}(k)^{-*})\bigr)\Bigr)+ \\ & \qquad \qquad \qquad + \bigl(y\circ y'',\phi_{\circ y''}(k)\circ k''\bigr)=
            \\& \Bigl((y\circ y')* y^{-*}*(y\circ y''),(\Lambda_{(y\circ y')* y^{-*}*(y\circ y'')})^{-1}\bigl(\phi_{*(y\circ y'')}(\Lambda_{(y\circ y')* y^{-*}}((\Lambda_{(y\circ y')* y^{-*}})^{-1}\\&\qquad \qquad\bigl(\phi_{* y^{-*}}(\Lambda_{y\circ y'}(\phi_{\circ y'}(k)\circ k')*\Lambda_{y}(k)^{-*})\bigr))*\Lambda_{y\circ y''}(\phi_{\circ y''}(k)\circ k''))\Bigr)=
            \\& \Bigl((y\circ y')* y^{-*}*(y\circ y''),(\Lambda_{(y\circ y')* y^{-*}*(y\circ y'')})^{-1}\bigl(\phi_{*(y\circ y'')}\bigl(\phi_{* y^{-*}}(\Lambda_{y\circ y'}(\phi_{\circ y'}(k)\circ k')*\\& \qquad \qquad\Lambda_{y}(k)^{-*})\bigr))*\Lambda_{y\circ y''}(\phi_{\circ y''}(k)\circ k'')\Bigr)   
        \end{split}
    \end{equation*}
    So $Y\times K$ is a left skew brace if and only if 
  $$  \begin{array}{l}
        \phi_{\circ (y'*y'')}(k)\circ (\Lambda_{y'*y''})^{-1}\bigl(\phi_{*y''}(\Lambda_{y'}(k'))*\Lambda_{y''}(k'')\bigr) = \\ \quad=
        (\Lambda_{y\circ (y'*y'')})^{-1}\bigl(\phi_{*\lambda_{y}^Y(y'')}(\Lambda_{y\circ y'}(\phi_{\circ y'}(k)\circ k')*\Lambda_{y}(k)^{-*})\bigr))*\Lambda_{y\circ y''}(\phi_{\circ y''}(k)\circ k'')
    \end{array}$$
        \end{proof}

Notice that in the equation in the statement of the previous proposition, the first term does not depend on $y$. Hence, we can replace that equation with two equations as in the statement of the next corollary:

\begin{corollary}
    Let $(Y,*,\circ)$ and $(K,*,\circ)$ be left skew braces. Then $(Y\times K,+, \circ)$ with the operations defined in Theorem \ref{thm2} is a left skew brace if and only if $$\begin{array}{l}
        \phi_{\circ (y'*y'')}(k)\circ (\Lambda_{y'*y''})^{-1}\bigl(\phi_{*y''}(\Lambda_{y'}(k'))*\Lambda_{y''}(k'')\bigr) = \\ \qquad=
        (\Lambda_{y'*y''})^{-1}\bigl(\phi_{*y''}(\Lambda_{y'}(\phi_{\circ y'}(k)\circ k')*k^{-*})\bigr)*\Lambda_{y''}(\phi_{\circ y''}(k)\circ k'')
    \end{array}$$ and $$\begin{array}{l}
        (\Lambda_{y'*y''})^{-1}\bigl(\phi_{*y''}(\Lambda_{y'}(\phi_{\circ y'}(k)\circ k')*k^{-*})\bigr)*\Lambda_{y''}(\phi_{\circ y''}(k)\circ k'') = \\ \ =
        (\Lambda_{y\circ (y'*y'')})^{-1}\bigl(\phi_{*\lambda_{y}^Y(y'')}(\Lambda_{y\circ y'}(\phi_{\circ y'}(k)\circ k')*\Lambda_{y}(k)^{-*})\bigr)*\Lambda_{y\circ y''}(\phi_{\circ y''}(k)\circ k'').
    \end{array}$$
    \end{corollary}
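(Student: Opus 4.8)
The plan is to deduce this corollary directly from Proposition~\ref{hl}. Since $Y$ and $K$ are assumed to be left skew braces, Proposition~\ref{hl} says that $(Y\times K,+,\circ)$ is a left skew brace precisely when $\Lambda$ is a group homomorphism $(Y,\circ)\to\Aut_\Gp(K,*)$ and the single identity displayed there holds for all $y,y',y''\in Y$ and all $k,k',k''\in K$. Hence, working under the standing hypotheses that $Y,K$ are left skew braces and $\Lambda$ is such a homomorphism, it suffices to prove the purely formal fact that the single identity of Proposition~\ref{hl} is equivalent to the conjunction of the two identities in the present statement.

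Write $L$ for the left-hand side $\phi_{\circ(y'*y'')}(k)\circ(\Lambda_{y'*y''})^{-1}(\phi_{*y''}(\Lambda_{y'}(k'))*\Lambda_{y''}(k''))$ of the identity of Proposition~\ref{hl}, and $R_y$ for its right-hand side; as already noted in the remark preceding the corollary, $L$ does not depend on $y$, whereas $R_y$ does. First I would specialise the identity $L=R_y$ to $y=1_Y$. Using that $1_Y$ is the common neutral element of $(Y,*)$ and $(Y,\circ)$, that $\Lambda_{1_Y}=\mathrm{id}_K$, that $\phi_{\circ 1_Y}=\mathrm{id}_K$, and that $\lambda^Y_{1_Y}=\mathrm{id}_Y$, one checks that $R_{1_Y}$ collapses to exactly $$(\Lambda_{y'*y''})^{-1}\bigl(\phi_{*y''}(\Lambda_{y'}(\phi_{\circ y'}(k)\circ k')*k^{-*})\bigr)*\Lambda_{y''}(\phi_{\circ y''}(k)\circ k''),$$ that is, to the common expression which is the right-hand side of the first displayed identity of the corollary and the left-hand side of the second. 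Thus the $y=1_Y$ instance of the Proposition~\ref{hl} identity is precisely the first identity of the corollary, namely $L=R_{1_Y}$.

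The equivalence is then a one-line logical argument. If $L=R_y$ holds for every $y$, then in particular $L=R_{1_Y}$ (the first identity) and, for arbitrary $y$, $R_{1_Y}=L=R_y$ (the second identity). Conversely, if both identities of the corollary hold, then for every $y$ we get $R_y=R_{1_Y}=L$ by the second identity followed by the first, so $L=R_y$ holds for all $y$. This gives the desired equivalence, and with it the corollary. I expect no real obstacle here; the only point demanding care is the bookkeeping in the specialisation $y=1_Y$ of $R_y$ --- checking that $(\Lambda_{y\circ(y'*y'')})^{-1}$ becomes $(\Lambda_{y'*y''})^{-1}$, that $\phi_{*\lambda^Y_y(y'')}$ becomes $\phi_{*y''}$, that $\Lambda_{y\circ y'}$ and $\Lambda_{y\circ y''}$ become $\Lambda_{y'}$ and $\Lambda_{y''}$, and that $\Lambda_y(k)^{-*}$ becomes $k^{-*}$ --- each of which is immediate from the normalisation properties of $\Lambda$, $\phi_\circ$ and $\lambda^Y$ recalled above.
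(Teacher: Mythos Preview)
Your argument is correct and is exactly the paper's approach: the sentence preceding the corollary observes that the left-hand side of the Proposition~\ref{hl} identity is independent of $y$, and the corollary is obtained by pairing the general identity with its $y=1_Y$ specialisation, precisely as you do. Your write-up is in fact more explicit than the paper's, which leaves the bookkeeping of the $y=1_Y$ substitution to the reader.
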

    
    \begin{remark}
	Recall that a left skew brace $(Y,*,\circ)$ is abelian in the sense of \cite{BFP} if the operations $*$ and $\circ$ coincide  and they are commutative. So suppose to have two abelian left skew braces $Y$ and $K$. In the notation of Proposition \ref{hl} if $\Lambda_y=\mathrm{Id}_K$ for every $y\in Y$ and $\phi_*=\phi_\circ=\phi$, then we obtain the usual semidirect product of groups.
\end{remark}

\begin{remark} One might wrongly think that the three mappings $\phi_*$, $\phi_\circ$ and $\Lambda$ are not completely independent. In order to see that this is not the case, let
us determine all possible semidirect products of the left skew brace $(\Z,+,+) $ and itself. (The example in Remark~\ref{bjip} is such an example). Since $\Aut_\Gp(\Z,+)\cong\Z/2\Z$, there are only two group morphisms $(\Z,+)\to \Aut_\Gp(\Z,+)$: one is the trivial morphism $\iota\colon (\Z,+)\to \Aut_\Gp(\Z,+)$ defined by $\iota(x)(y)=y$ for every $x,y\in \Z$, and the other is the group morpism $\alpha\colon (\Z,+)\to \Aut_\Gp(\Z,+)$ defined by $\alpha(x)(y)=(-1)^xy$ for every $x,y\in \Z$. Hence we have $2^3=8$ possible cases for the three morphisms $\phi_*,\phi_\circ$ and $\Lambda$, corresponding to the eight rows in the following table:
\begin{center}
  \begin{tabular}{ p{0.8cm} | c | c | c | c | c | p{1.8cm} }
The eight cases & $\phi_*$ & $\phi_\circ$ & $\Lambda$ & $(y,k)+(y',k')=$  & $(y,k)\circ(y',k')=$ & Is $Y\times K$ a left skew brace? \\
\hline
(1) & $\iota$ & $\iota$ & $\iota$ & $(y+y',k+k')$  & $(y+y',k+k')$ & Yes \\
\hline
(2) & $\iota$ & $\iota$ & $\alpha$ & $(y+y',(-1)^{y'}k+(-1)^{y}k')$  & $(y+y',k+k')$ & No \\
\hline
(3) & $\iota$ & $\alpha$ & $\iota$ & $(y+y',k+k')$  & $(y+y',(-1)^{y'}k+k')$ & No \\
\hline
(4) & $\iota$ & $\alpha$ & $\alpha$ & $(y+y',(-1)^{y'}k+(-1)^{y}k')$  & $(y+y',(-1)^{y'}k+k')$ & No \\
\hline
(5) & $\alpha$ & $\iota$ & $\iota$ & $(y+y',(-1)^{y'}k+k')$  & $(y+y',k+k')$ & Yes \\
\hline
(6) & $\alpha$ & $\iota$ & $\alpha$ & $(y+y',k+(-1)^{y}k')$  & $(y+y',k+k')$ & No \\
\hline
(7) & $\alpha$ & $\alpha$ & $\iota$ & $(y+y',(-1)^{y'}k+k')$  & $(y+y',k+(-1)^{y}k')$ & Yes \\
\hline
(8) & $\alpha$ & $\alpha$ & $\alpha$ & $(y+y',k+(-1)^{y}k')$  & $(y+y',k+(-1)^{y}k')$ & Yes \\
\end{tabular}\end{center}

\bigskip

For instance: 

(a) In all the Examples (1)-(8), $\Lambda_y$ is always $\iota$ or $\alpha$. In both cases $(\Lambda_y)^{-1}=\Lambda_y$.

(b) Example (1) is the left skew brace $(\Z\oplus\Z, +,+)$, where $(\Z\oplus\Z, +)$ is the direct sum of two copies of the group $\Z$. Hence it is a left skew brace.

(c) In Example (2), the formula in the statement of Proposition \ref{hl} becomes $$(-1)^{y'+y''}k+k'+k''=(-1)^{y'}k+k'-k+{y''}k+k'',$$ which is false for $y'=y''=k=1$ and $k'=k''=0$. Therefore the digroup of Example (2) is not a left skew brace.

(d) The example in Remark~\ref{bjip} corresponds to the case (5). 
\end{remark}

\section{Actions and semidirect products of digroups}

What we have seen in the previous paragraph allows us to define actions of digroups on other digroups.

An {\em action} of a digroup $D$ on a digroup $K$ is a triplet $(\phi_*,\phi_{\circ},\Lambda)$, where $$\phi_*\colon(D,*)\to \Aut(K,*)\quad\mbox{\rm and}\quad\phi_{\circ}\colon (D,\circ)\to \Aut(K,\circ)$$ are group antihomomorphisms, and $\Lambda\colon (D,1)\to (\Aut_{\Set_*}(K,1),\mathrm{Id})$ is a morphism in the category $\Set_*$ of pointed sets. More formally:

\begin{definition}{\rm Let $(Y,*,\circ)$ be a digroup.
A {\em $Y$-digroup} $(K,*,\circ,\phi_{*}, \phi_{\circ},\Lambda)$ is a digroup $(K,*,\circ)$ with two group antihomomorphisms $\phi_{*}\colon (Y,*)\to\Aut(K,*)$ and $ \phi_{\circ}\colon (Y,\circ)\to\Aut(K,\circ)$ and a morphism $\Lambda\colon  (D,1)\to (\Aut_{\Set_*}(K,1),\mathrm{Id})$ in the category $\Set_*$. }\end{definition}

The most natural example of a $Y$-digroup is the $Y$-digroup $(Y,*,\circ,\phi_{*},\phi_{\circ},\lambda^Y)$, where \begin{equation}\label{alpha}\phi_{*}\colon (Y,*)\to\Aut(Y,*)\quad\mbox{\rm  and}\quad \phi_{\circ}\colon (Y,\circ)\to\Aut(Y,\circ) \end{equation} are defined by $$\phi_{*}(x)(y)=x^{-*}*y*x\quad\mbox{\rm  and}\quad \phi_{\circ}(x)(y)=x^{-\circ}\circ y\circ x,$$ respectively, and $\Lambda\colon (Y,1)\to (\Aut_{\Set_*}(Y,1),\mathrm{Id})$ is defined by $\lambda^Y(x)(y)=x^{-*}*(x\circ y)$., for every $x,y\in Y$.

\bigskip

According to Theorem~\ref{the truth}, given a $(Y,*,\circ)$-digroup $(K,*,\circ,\phi_*,\phi_\circ,\Lambda)$, on the cartesian product $Y\times K$ we can define two operations via $$(y,k)+(y',k')=(y* y', (\Lambda_{y* y'})^{-1}(\phi_{* y'}(\Lambda_y(k))* \Lambda_{y'}(k')))$$ and $$(y,k) \circ(y',k')=(y\circ y', \phi_{\circ y'}(k)\circ k')$$ for every $(y,k),(y',k')\in Y\times K$, getting a digroup.

\begin{definition}{\rm Let $(Y,*,\circ)$ be a left skew braces.
A {\em $Y$-skew brace} $(K,*,\circ,\phi_{*}, \phi_{\circ},\Lambda)$ is a left skew brace $(K,*,\circ)$ with two group antihomomorphisms $\phi_{*}\colon (Y,*)\to\Aut(K,*)$ and $ \phi_{\circ}\colon (Y,\circ)\to\Aut(K,\circ)$ and a group homomorphism $\Lambda\colon  (Y,\circ)\to \Aut(K,*)$ such that 
$$\begin{array}{l}
        \phi_{\circ (y'*y'')}(k)\circ (\Lambda_{y'*y''})^{-1}\bigl(\phi_{*y''}(\Lambda_{y'}(k'))*\Lambda_{y''}(k'')\bigr) = \\ \quad=
        (\Lambda_{y\circ (y'*y'')})^{-1}\bigl(\phi_{*\lambda_{y}^Y(y'')}(\Lambda_{y\circ y'}(\phi_{\circ y'}(k)\circ k')*\Lambda_{y}(k)^{-*})\bigr)*\Lambda_{y\circ y''}(\phi_{\circ y''}(k)\circ k'')
    \end{array}$$
 }\end{definition}

In this case, we can construct the outer semidirect product of the two left skew braces, which turns out to be a left skew brace.

\bigskip

Let us give a further example of a left skew brace $(A,*,\circ)$ that is a semidirect product of left skew braces \cite[Example 1.13]{SV}.

\begin{example} {\rm In this example the group $(D,*)$ is the symmetric group $S_3$, and the group $(D,\circ)$ is the cyclic group $C_6$ of order 6 generated by the transposition $(1\ 2)$. The group morphism $\lambda\colon C_6\to\Aut(S_3)$ maps the identity, and the two cycles of order 3 to the identity of $\Aut(S_3)$, and maps the three transpositions to the conjugation by $(2\ 3)$. Here the powers $(1\ 2)$, $(1\ 2)^2$, $(1\ 2)^3$, $(1\ 2)^4$, $(1\ 2)^5$, $(1\ 2)^6$ of $(1\ 2)$ in $(D,\circ)$  are $(1\ 2)$, $(1\ 3\ 2)$, $(2\ 3)$, $(1\ 2\ 3)$, $(1\ 3), \id$, respectively. If we consider the mapping $e\colon (D,*)\to (D,*)$ that maps any element $f$ of $S_3$ to $(2\ 3)^{\sgn(f)}$, we clearly get an idempotent endomorphism of the group $(D,*)$. But the same mapping $e$ is an endomorphism of $(D,\circ)$ as well, because it is the mapping that sends the even powers of $(1\ 2)$ in $(D,\circ)$ to $\id$, and the odd  powers of $(1\ 2)$ in $(D,\circ)$ to the element $(2\ 3)$ of order 2 of $(D,\circ)$. Hence the left skew brace $(D,*,\circ)$ is a semidirect product of the ideal $A_3$ and the skew subbrace $\{\id, (2\ 3)\}$.

In this example $K$ is the cyclic group with three elements and $Y$ is the cyclic group with two elements. (Both are abelian left skew braces.) Hence $\Aut(K)$ has exactly two elements (the identity $\iota$ and an involution), and so there are exactly two group (anti)homomorphisms $Y\to\Aut(K)$ (the trivial homomorphism $i$  that sends both elements of $Y$ to $\iota$ and the unique group isomorphism $f\colon Y\to\Aut(K)$). 
For their semidirect product extension $D$, we find that $(D,*)$ is the symmetric group $S_3$, and $(D,\circ)$ is the cyclic group with six elements. Since $(D,*)\cong S_3$ is not abelian, we have that $\phi_*=f$. Since $(D,\circ)$ is the cyclic group with six elements, hence it is abelian, we get that $\phi_\circ=i$. It is now easy to show that $\Lambda=f$, as well.}\end{example}

\section{Relation with centers and commutators}\label{5}

The commutator of two ideals $I$ and $J$ of a left skew brace $A$ is the ideal of $A$ generated by the commutator $[I,J]_{(A,*)}$ in the group $(A,*)$, the commutator $[I,J]_{(A,\circ)}$ in the group $(A,\circ)$, and all the elements $(i\circ j)^{-*}*i*j$ with $i\in I$ and $j\in J$ \cite{BFP}.

\begin{Lemma}\label{comm} For every pair of ideals $I$ and $J$ of a left skew brace $A$, one has $[I,J]=[J,I]$.\end{Lemma}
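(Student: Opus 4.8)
The plan is to show that the defining data of the commutator $[I,J]$ is symmetric in $I$ and $J$, so that the ideal it generates does not depend on the order of the arguments. The commutator of the groups $(A,*)$ and $(A,\circ)$ are obviously symmetric, since $[I,J]_{(A,*)}=[J,I]_{(A,*)}$ and similarly for $\circ$ (as subgroups, the commutator subgroups generated by two normal subgroups are the same regardless of order). So the only point requiring work is the set of ``mixed'' generators $S(I,J)=\{\,(i\circ j)^{-*}*i*j\mid i\in I,\ j\in J\,\}$: I must show that the ideal generated by $[I,J]_{(A,*)}\cup[I,J]_{(A,\circ)}\cup S(I,J)$ coincides with the one generated by $[I,J]_{(A,*)}\cup[I,J]_{(A,\circ)}\cup S(J,I)$.

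First I would compute, for $i\in I$ and $j\in J$, the element $(j\circ i)^{-*}*j*i$ and try to express it, modulo the group commutators $[I,J]_{(A,*)}$ and $[I,J]_{(A,\circ)}$ (which already lie in $[I,J]$ and in $[J,I]$), in terms of $(i\circ j)^{-*}*i*j$. The natural tool is the skew brace identity (B): $a\circ(b*c)=(a\circ b)*a^{-*}*(a\circ c)$, equivalently $\lambda^A_a(b*c)=\lambda^A_a(b)*\lambda^A_a(c)$, together with the fact that $I$ and $J$ are ideals, hence $\lambda^A_a(I)\subseteq I$ and $\lambda^A_a(J)\subseteq J$ for all $a\in A$, and that $a*I=a\circ I$. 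A cleaner route may be to recall from \cite{BFP} the characterization of $[I,J]$ as the smallest ideal $L$ of $A$ such that, in the quotient $A/L$, the images of $I$ and $J$ commute in both $(A/L,*)$ and $(A/L,\circ)$ and satisfy $\bar i\circ\bar j=\bar i*\bar j$ for all $i\in I$, $j\in J$ — i.e. the images ``act trivially on each other''. If I use that characterization, the statement $[I,J]=[J,I]$ becomes: the condition ``images of $I$ and $J$ commute in $*$, commute in $\circ$, and $\bar i\circ\bar j=\bar i*\bar j$'' is symmetric in $I$ and $J$. Commuting in a group is a symmetric relation, so the only thing to check is that, modulo an ideal $L$ for which $\bar I$ and $\bar J$ commute in both operations, one has $\bar i\circ\bar j=\bar i*\bar j$ for all $i,j$ if and only if $\bar j\circ\bar i=\bar j*\bar i$ for all $i,j$.

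So the main obstacle is this last equivalence: starting from $\bar\imath\circ\bar\jmath=\bar\imath*\bar\jmath$ (for all $i\in I$, $j\in J$) plus commutativity of $\bar I$ with $\bar J$ in both groups, I must derive $\bar\jmath\circ\bar\imath=\bar\jmath*\bar\imath$. I would work in $A/L$ (renaming it $A$ and dropping bars) and manipulate as follows: use $\circ$-commutativity to write $j\circ i=i\circ j$, then apply the hypothesis $i\circ j=i*j$, then $*$-commutativity to get $i*j=j*i$; this chain gives $j\circ i=i\circ j=i*j=j*i$, which is exactly $\bar\jmath\circ\bar\imath=\bar\jmath*\bar\imath$. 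This looks almost immediate under the quotient characterization, which is why I would prefer that route; the subtlety is only in justifying that the three conditions defining ``$L\supseteq[I,J]$'' are exactly those three, and that they are individually preserved, which is precisely the content of the definition of $[I,J]$ recalled above from \cite{BFP}.

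If instead one insists on the generator-level argument without passing to the quotient, the hard part becomes showing directly that $(j\circ i)^{-*}*j*i$ lies in the ideal generated by $[I,J]_{(A,*)}$, $[I,J]_{(A,\circ)}$ and $S(I,J)$: here I would expand $(j\circ i)^{-*}*j*i$ and $(i\circ j)^{-*}*i*j$ and compare, using (B) to turn $\circ$-products into $*$-expressions and using that conjugates (under both $*$ and $\circ$) and $\lambda$-images of the generators stay inside the ideal being generated. This is a finite but somewhat intricate computation; the quotient formulation packages it away, so I would present the proof via the universal characterization of $[I,J]$ from \cite{BFP} and keep the verification to the three-line chain of equalities above, remarking that symmetry of group commutators handles the other two pieces.
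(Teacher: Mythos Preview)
Your proposal is correct and is essentially the paper's own argument: note that the two group commutators are symmetric, pass to the quotient by $[I,J]$ so that $\bar I$ and $\bar J$ commute in both operations and $\bar\imath\circ\bar\jmath=\bar\imath*\bar\jmath$, and then run the chain $\bar\jmath\circ\bar\imath=\bar\imath\circ\bar\jmath=\bar\imath*\bar\jmath=\bar\jmath*\bar\imath$. The paper phrases the reduction more tersely (and somewhat loosely, speaking of ``$(A,*)$ and $(A,\circ)$ abelian'' where only elementwise commutation of $I$ and $J$ is used), but the substance is identical.
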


\begin{proof} For groups, one has that $[I,J]_{(A,*)}=[J,I]_{(A,*)}$ and $[I,J]_{(A,\circ)}=[J,I]_{(A,\circ)}$. Hence it remains to show that if $(A,*,\circ)$ is a left skew brace with the groups $(A,*)$ and $(A,\circ)$ abelian and $(i\circ j)^{-*}*i*j=1$ for every $i\in I$ and $j\in J$, then  $(j\circ i)^{-*}*j*i=1$ for every $i\in I$ and $j\in J$. Now $(i\circ j)^{-*}*i*j=1$ implies that $i\circ j=i*j$, so that $j\circ i=i\circ j=i*j=j*i$. Hence $(j\circ i)^{-*}*j*i=1$.\end{proof}

Recall that in the lattice of all the ideals of a left skew brace $A$, one has that $I\vee J=I*J=I\circ J$ and $I\wedge J=I\cap J$. This is a multiplicative lattice \cite{FFJ} in which multiplication is commutative in view of Lemma~\ref{comm}.

\begin{Lemma} If $I,J,K$ are ideals of a left skew brace $A$, then $[I,J*K]=[I,J]*[I,K]$.\end{Lemma}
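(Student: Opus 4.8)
The plan is to prove the distributive law $[I,J*K]=[I,J]*[I,K]$ for ideals of a left skew brace $A$ by establishing the two inclusions separately, exploiting that $J*K=J\vee K$ in the ideal lattice and that the commutator is generated by three types of elements (group commutators in $(A,*)$, group commutators in $(A,\circ)$, and the ``brace elements'' $(i\circ j)^{-*}*i*j$).

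For the inclusion $[I,J]*[I,K]\subseteq[I,J*K]$, I would observe that $J\subseteq J*K$ and $K\subseteq J*K$ (since $J*K=J\vee K$ is the join), so by monotonicity of the commutator in the second variable — which itself follows directly from the generator description, since every generator of $[I,J]$ is a generator of $[I,J*K]$ — we get $[I,J]\subseteq[I,J*K]$ and $[I,K]\subseteq[I,J*K]$; as $[I,J*K]$ is an ideal, hence closed under $*$, the product $[I,J]*[I,K]$ lies in it. The reverse inclusion $[I,J*K]\subseteq[I,J]*[I,K]$ is the substantive direction. Here I would first note that $[I,J]*[I,K]$ is an ideal (join of two ideals in the ideal lattice), so it suffices to check that each of the three families of generators of $[I,J*K]$ lies in it. The key reduction is that an arbitrary element of $J*K=J\circ K$ can be written as $j*k$ (equivalently $j\circ k$) with $j\in J$, $k\in K$, and one pushes the commutator identities through this decomposition.

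Concretely: for a $*$-commutator $[i,j*k]_{(A,*)}=(i)^{-*}*(j*k)^{-*}*i*(j*k)$ with $i\in I$, the standard group-commutator identity $[i,j*k]_{(A,*)}=[i,k]_{(A,*)}\cdot{}^{k^{-*}}[i,j]_{(A,*)}$ (conjugate of $[i,j]_{(A,*)}$ by $k$) expresses it as a product of a generator of $[I,K]$ and a conjugate of a generator of $[I,J]$; since $[I,J]$ is normal in $(A,*)$, that conjugate stays in $[I,J]$, so the whole thing lies in $[I,J]*[I,K]$. The same argument handles $\circ$-commutators using normality of $[I,J]$ in $(A,\circ)$. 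The remaining family, the brace generators $(i\circ(j*k))^{-*}*i*(j*k)$ for $i\in I$, $j\in J$, $k\in K$, is the main obstacle: one must expand $i\circ(j*k)$ using the skew brace identity \eqref{lsb}, namely $i\circ(j*k)=(i\circ j)*i^{-*}*(i\circ k)$, substitute, and then rearrange the resulting word — using the defining relation $(i\circ j)^{-*}*i*j\in[I,J]$, $(i\circ k)^{-*}*i*k\in[I,K]$, together with the normality of $[I,J]$ and $[I,K]$ in both group structures and the fact that modulo $[I,J]*[I,K]$ the images of $i$, $j$, $k$ behave well — to show the element is trivial modulo $[I,J]*[I,K]$.

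The cleanest way to organize this last step, and the way I would actually write it, is to pass to the quotient $\bar A = A/([I,J]*[I,K])$. In $\bar A$ the images $\bar I,\bar J,\bar K$ satisfy $[\bar I,\bar J]=[\bar I,\bar K]=1$, so (as in the proof of Lemma~\ref{comm}) $\bar i\circ\bar j=\bar i*\bar j$ and the two group operations commute appropriately when an element of $\bar I$ meets an element of $\bar J$ or $\bar K$; one then checks that the image of each generator of $[I,J*K]$ is $\bar 1$, which amounts to a short computation with \eqref{lsb} in $\bar A$ where the relations $\bar i\circ\bar j=\bar i*\bar j$ and $\bar i\circ\bar k=\bar i*\bar k$ collapse $\bar i\circ(\bar j*\bar k)$ to $\bar i*(\bar j*\bar k)$. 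This shows $[I,J*K]\subseteq[I,J]*[I,K]$, completing the proof. The one point requiring care is verifying that in $\bar A$ the commutator relations $[\bar I,\bar J]=[\bar I,\bar K]=1$ really do force $\bar i$ to commute with $\bar j$ and $\bar k$ in \emph{both} operations and that the brace generator $(i\circ j)^{-*}*i*j$ being killed gives exactly $\bar i\circ\bar j=\bar i*\bar j$ — but this is precisely the mechanism already used in Lemma~\ref{comm}, so it transfers with only notational changes.
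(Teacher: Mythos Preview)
Your proof is correct and follows essentially the same strategy as the paper: the easy inclusion via monotonicity, the group-commutator generators handled by the classical identity $[I,JK]=[I,J][I,K]$ for normal subgroups (which the paper simply cites as known, whereas you spell out the commutator identity), and the brace generators treated by passing to the quotient $A/([I,J]*[I,K])$. The only cosmetic difference is in that last step: the paper rephrases the relation $\bar i*\bar j=\bar i\circ\bar j$ as ``$\bar J$ lies in the kernel of the group morphism $\lambda|^{\bar I}\colon(\bar A,\circ)\to\Aut(\bar I,*)$'' and then uses that kernels are closed under $\circ$ to get $\bar J\circ\bar K$ in the kernel, whereas you invoke identity~\eqref{lsb} directly to collapse $\bar i\circ(\bar j*\bar k)=(\bar i\circ\bar j)*\bar i^{-*}*(\bar i\circ\bar k)$ to $\bar i*\bar j*\bar k$ --- both routes reach the same conclusion.
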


\begin{proof} From $J*K\supseteq J,K$, it follows that $[I,J*K]\supseteq [I,J],[I,K]$. Therefore $[I,J*K]\supseteq [I,J]*[I,K]= [I,J]\circ [I,K]$. Now it is known that the equality $[I,JK]=[I,J][I,K]$ holds for normal subgroups $I,J,K$ of a group $G$. Hence, to conclude the proof, it suffices to prove that if $(A,*,\circ)$ is a brace in which both the groups $(A,*)$ and $(A,\circ)$ are abelian, and $i*j=i\circ j$, $i*k=i\circ k$ for every $i\in I$, $j\in J$ and $k\in K$, then $i*(j\circ k)=i\circ(j\circ k)$ for every $i,j,k$. Now $i*j=i\circ j$ for every $i$ and $j$ can be restated saying that $J$ is contained in the kernel of the group morphism $\lambda|^I\colon (A,\circ)\to\Aut(I,*)$. Similarly, $K$ is contained in the kernel of the group morphism $\lambda|^I$. Therefore $J\circ K$ is contained in that kernel, that is $i*x=i\circ x$ for every $x\in J\circ K=J*K$, as desired.
\end{proof}

Recall that it is possible to define the {\em right center} $rZ(x):=x\wedge \rann_L(x)$ and the
{\em left center} $lZ(x):=x\wedge \lann_L(x)$ of any element $x$ of a multiplicative lattice $L$. The  {\em right center} $rZ(L)$ and   the
{\em left center} $lZ(L)$ of a multiplicative lattice $L$ are the right center and the left center of the greatest element $1$ of $L$. In our special case of left skew braces, the commutator $[I,J]$ of two ideals $I,J$ is equal to $[J,I]$, so that right center and left center of a left skew brace $A$ coincide. It is the greatest ideal $Z$ of $A$ such that $[Z,A]=1$. Hence $Z=\{\, z\in A\mid a*z=z*a, a\circ z=z\circ a$ and $a*z=a\circ z$ for every $a\in A\,\}$.

%
%
%

\end{document}